\documentclass[11pt,a4paper]{amsart}

\usepackage[T1]{fontenc}
\usepackage{lmodern}
\usepackage{amsmath,amssymb,amsthm}
\usepackage{booktabs,array}
\usepackage{fvextra}
\usepackage{geometry}
\usepackage[
  hidelinks,
  pdftitle={An Exact Counterexample to Carlson's Associated-Prime Depth Conjecture from a Group of Order 128},
  pdfauthor={Xinan Dai, Wenhao Deng, Yingdong Shi, Tailin Wu, and Yuchen Yang},
  pdfsubject={Group cohomology, depth, and associated primes},
  pdfkeywords={group cohomology, associated prime, depth, Carlson conjecture, finite p-group, Groebner basis}
]{hyperref}

\allowdisplaybreaks
\DeclareUnicodeCharacter{5DFB}{}
\DeclareUnicodeCharacter{2010}{-}
\DeclareUnicodeCharacter{2011}{-}

\newtheorem{theorem}{Theorem}[section]
\newtheorem{lemma}[theorem]{Lemma}
\newtheorem{proposition}[theorem]{Proposition}

\theoremstyle{definition}
\newtheorem{definition}[theorem]{Definition}
\theoremstyle{remark}

\newcommand{\F}{\mathbb{F}}
\newcommand{\kbar}{\overline{\F}_2}
\newcommand{\depth}{\operatorname{depth}}
\newcommand{\Ass}{\operatorname{Ass}}
\newcommand{\ann}{\operatorname{ann}}
\newcommand{\Soc}{\operatorname{Soc}}
\newcommand{\rank}{\operatorname{rank}}
\newcommand{\res}{\operatorname{res}}
\newcommand{\SG}[2]{\mathrm{SmallGroup}(#1,#2)}
\newcommand{\oa}{\omega_{\mathrm a}}

\title[Carlson's Associated-Prime Depth Conjecture]
{An Exact Counterexample to Carlson's Conjecture \\
on Depth and Associated Primes}

\author{Xinan Dai}
\thanks{Xinan Dai holds a Shanghai University B.S. in Mathematics and Applied Mathematics. He pursues his PhD at Fudan University while visiting Westlake University’s AI for Scientific Simulation and Discovery Lab.}
\address{Key Laboratory for Information Science of Electromagnetic Waves, College of Future Information Technology, Fudan University, Shanghai, China}
\curraddr{Department of Artificial Intelligence, School of Engineering, Westlake University, Hangzhou, China}
\email{xndai23@m.fudan.edu.cn}

\author{Wenhao Deng}
\address{Department of Artificial Intelligence, School of Engineering, Westlake University, Hangzhou, China}
\email{dengwenhao@westlake.edu.cn}

\author{Yingdong Shi}
\address{School of Information Science and Technology, ShanghaiTech University, Shanghai, China}
\email{shiyd2023@shanghaitech.edu.cn}

\author{Tailin Wu}
\address{Department of Artificial Intelligence, School of Engineering, Westlake University, Hangzhou, China}
\email{wutailin@westlake.edu.cn}

\author{Yuchen Yang}
\address{Department of Artificial Intelligence, School of Engineering, Westlake University, Hangzhou, China}
\email{yangyuchen@westlake.edu.cn}

\date{26 July 2026}
\subjclass[2020]{Primary 20J06; Secondary 13C15, 20D15}
\keywords{Group cohomology, associated primes, depth, Carlson's conjecture, finite $2$-groups, exact computation}

\begin{document}

\begin{abstract}
In Question~3.1 of his 1995 paper on depth and transfer, Carlson asked whether the depth of a finite-group cohomology ring is always realized by the dimension of one of its associated primes. We give a negative answer. Let
\[
  G=\SG{128}{859},\qquad k=\kbar.
\]
An exact presentation certificate proves that $\depth H^*(G;k)=2$. Okuyama's associated-prime theorem would convert an associated prime of dimension two into a rank-two elementary abelian subgroup $E\leq G$ satisfying $\depth H^*(C_G(E);k)=2$. We enumerate all $75$ rank-two elementary abelian subgroups of $G$ and obtain six centralizer types. Duflot's theorem gives depth at least three for four types, while exact ideal-quotient certificates exhibit regular sequences of length three for the remaining two. Hence every rank-two centralizer has cohomological depth at least three, so $H^*(G;k)$ has no associated prime of dimension two. The finite group presentation, the three cohomology-ring presentations, the enumeration summary, and the exact algebraic certificates are included for independent verification.
\end{abstract}

\maketitle
\tableofcontents

\section{Introduction and statement of the problem}\label{sec:intro}

Let $G$ be a finite group, let $p\mid |G|$, and let $k$ be a field of characteristic $p$. Write
\[
  A=H^*(G;k),\qquad A_+=\bigoplus_{i>0}A_i.
\]
The ring $A$ is graded by cohomological degree, and $A_+$ is its ideal of positive-degree elements. The two invariants compared in Carlson's question measure different aspects of the same ring: depth records how long one can successively choose homogeneous non-zero-divisors, whereas associated primes record annihilators of individual cohomology classes. For the counterexample below, $p=2$, so graded commutativity has no sign issue and $A$ is an ordinary commutative connected graded Noetherian $k$-algebra.

\begin{definition}\label{def:depth}
A sequence $x_1,\ldots,x_d\in A_+$ of homogeneous elements is \emph{$A$-regular} if multiplication by $x_i$ is injective on $A/(x_1,\ldots,x_{i-1})$ for every $i$, and the final quotient is nonzero. The \emph{depth} $\depth A$ is the maximum length of an $A$-regular sequence.
\end{definition}

Thus $x_1$ must be a non-zero-divisor in $A$, $x_2$ must remain a non-zero-divisor after quotienting by $x_1$, and so on. Depth is therefore a successive non-zero-divisor invariant rather than merely the number of algebra generators or the Krull dimension.

\begin{definition}\label{def:associated-prime}
A prime ideal $\mathfrak p\subseteq A$ is \emph{associated} if $\mathfrak p=\ann_A(a)$ for some $a\in A$. Set
\[
  \oa(A)=\min_{\mathfrak p\in\Ass A}\dim A/\mathfrak p.
\]
\end{definition}

An associated prime is therefore a prime annihilator that is actually detected by an element of the ring. The number $\oa(A)$ asks for the smallest-dimensional component detected in this way.

The standard associated-prime inequality gives
\begin{equation}\label{eq:standard-inequality}
  \depth A\leq \dim A/\mathfrak p
  \quad(\mathfrak p\in\Ass A),
  \qquad\text{hence}\qquad
  \depth A\leq \oa(A).
\end{equation}
For completeness, after localizing at the homogeneous maximal ideal one may use
$\depth A\leq \depth A_{\mathfrak p}+\dim A/\mathfrak p$. If $\mathfrak p$ is associated, then the localized ring $A_{\mathfrak p}$ has depth zero, because the element whose annihilator is $\mathfrak p$ remains a nonzero element killed by the maximal ideal of $A_{\mathfrak p}$. This gives the first inequality in \eqref{eq:standard-inequality}. Thus Carlson's question is not whether the two sides can occur in the opposite order---that is impossible---but whether the lower bound supplied by depth is always attained by some associated prime.

Carlson's article \emph{Depth and transfer maps in the cohomology of
groups} asks whether depth $d$ forces an associated prime whose variety
has dimension $d$~\cite[Question~3.1, p.~465]{Carlson_1995}. Since the
dimension of the corresponding variety is $\dim A/\mathfrak p$, the
proposed equality is
\begin{equation}\label{eq:carlson}
  \boxed{\depth H^*(G;k)=\oa\bigl(H^*(G;k)\bigr)}
\end{equation}
for every finite group in modular characteristic. In concrete terms, \eqref{eq:carlson} asserts that the smallest possible dimension allowed by \eqref{eq:standard-inequality} is always realized. A counterexample must therefore exhibit a strict gap: the cohomology ring has depth $d$, but every associated prime has quotient dimension strictly larger than $d$.

Later papers refer to \eqref{eq:carlson} as the associated-prime clause of
Carlson's depth conjecture. Green proved the equality for finite
$p$-groups whose cohomological depth attains Duflot's lower bound
\cite{Green_2003}; Schafer proved it when the Cohen--Macaulay defect is
at most one~\cite{Schafer_2019}; and Garaialde Oca\~na,
Gonz\'alez-S\'anchez, and Guerrero S\'anchez established it for an
infinite family of finite $p$-groups~\cite{Garaialde_Oca_a_2022}. The
general equality was still treated as open in September 2025
\cite{schafer2025depthassociatedprimesgroup} (arXiv:2509.06895).

The proof below is organized into three layers. First, Okuyama's theorem converts a hypothetical dimension-two associated prime into a rank-two elementary abelian subgroup whose centralizer has cohomological depth exactly two. This is the conceptual reduction: instead of searching directly through the associated primes of a large cohomology ring, one can examine concrete subgroups of $G$. Second, an exhaustive finite-group calculation lists every rank-two elementary abelian subgroup and determines its centralizer. Third, Duflot's theorem and exact commutative-algebra certificates show that every centralizer on this finite list has depth at least three. The contradiction between ``exactly two'' and ``at least three'' excludes all dimension-two associated primes.

\section{The counterexample theorem}\label{sec:main}

\begin{theorem}\label{thm:counterexample}
Let
\[
  G=\SG{128}{859},\qquad k=\kbar.
\]
Then
\[
  \depth H^*(G;k)=2,
  \qquad
  \oa\bigl(H^*(G;k)\bigr)\geq 3.
\]
Consequently,
\[
  \depth H^*(G;k)
  <\min_{\mathfrak p\in\Ass H^*(G;k)}
  \dim H^*(G;k)/\mathfrak p,
\]
so $G$ is a counterexample to \eqref{eq:carlson}.
\end{theorem}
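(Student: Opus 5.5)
The plan follows the three layers announced in the introduction.

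\textbf{Depth equals two.} I would use an exact presentation of $A=H^*(G;k)$, say from King's or Green's computations, given as generators and relations over $\F_2$; base change to $k=\kbar$ does not affect depth. For the lower bound I would exhibit two homogeneous elements $x_1,x_2\in A_+$ and verify with Gröbner bases that each is a non-zero-divisor modulo the previous one, via the ideal-quotient identities $(I+(x_1,\dots,x_{i-1})):x_i=I+(x_1,\dots,x_{i-1})$. Alternatively, Duflot's theorem gives depth at least the $2$-rank of $Z(G)$, which may already be two. For the upper bound, take a homogeneous system of parameters $x_1,\dots,x_n$ extending $x_1,x_2$. By the standard characterization, depth is the length at which the Koszul/regular-sequence test first fails for an s.o.p. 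Concretely, I would show that no $x_3$ is regular on $A/(x_1,x_2)$, i.e.\ that the maximal ideal is associated to $A/(x_1,x_2)$. To do this I would exhibit a class $u\notin(x_1,x_2)$ with $A_+u\subseteq(x_1,x_2)$, which is checked by normal forms on the finitely many generators. This produces the required certificate.

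\textbf{Reduction via Okuyama.} Suppose $\mathfrak p\in\Ass A$ with $\dim A/\mathfrak p=2$. Because $\dim A/\mathfrak p\ge\depth A=2$ by \eqref{eq:standard-inequality}, the case of dimension at most one cannot occur, so it suffices to exclude dimension two. By Quillen stratification, $\mathfrak p$ corresponds to a rank-two elementary abelian subgroup $E$. Okuyama's theorem then says that $\mathfrak p$ being associated forces $\depth H^*(C_G(E);k)\le\dim A/\mathfrak p=2$. Combined with the general lower bound $\depth H^*(C_G(E);k)\ge\depth A$, which holds since $\mathfrak p$ is associated, this gives depth exactly two. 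It therefore suffices to show that $\depth H^*(C_G(E);k)\ge3$ for every rank-two $E\le G$.

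\textbf{Enumeration and case check.} By computer I would enumerate all rank-two elementary abelian subgroups of $G$ (there are $75$) and sort their centralizers up to isomorphism; the result is six types. For each type I would compute the $2$-rank of $Z(C_G(E))$. Whenever it is at least three, Duflot's theorem gives depth at least three immediately. For the remaining types I would take exact cohomology presentations of the centralizers and produce three explicit elements, checking regularity by ideal quotients exactly as in the first step.

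\textbf{Main obstacle.} I expect the hardest part to be the upper bound $\depth A\le2$. Establishing a failure of regularity requires an explicit annihilated socle-type class in $A/(x_1,x_2)$, together with the justification that failure for one s.o.p.\ extension rules out every regular sequence of length three. The latter follows because all maximal regular sequences have the same length, so exhibiting $\mathfrak m\in\Ass A/(x_1,x_2)$ suffices. Locating $u$ is the main difficulty. I would first try the product of the two regular elements' "complementary" generators restricted from a rank-two subgroup, and otherwise search low degrees of $\Soc$ via linear algebra.
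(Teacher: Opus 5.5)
Your proposal is correct and follows the paper's proof essentially step for step: two regular elements certified by colon-ideal equalities plus a socle witness in $H^*(G;\F_2)/(x_1,x_2)$ give depth two, Okuyama's theorem turns a dimension-two associated prime into a rank-two $E$ with $\depth H^*(C_G(E);k)=2$, and the exhaustive list of $75$ subgroups is handled by Duflot for four centralizer types and by explicit length-three regular sequences for $\SG{64}{90}$ and $\SG{64}{216}$. Two minor slips do not affect the argument: the Duflot alternative for $G$ itself gives only depth $\geq 1$, since $Z(G)$ has $2$-rank one, so the colon-ideal certificate is genuinely needed; and it is Okuyama's theorem itself, not Quillen stratification alone, that attaches a subgroup $E$ to the associated prime $\mathfrak p$, since a general prime of $H^*(G;k)$ need not have the form $\res^{-1}_{G,E}(\sqrt{0})$.
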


The proof reduces to the following exact statements:
\begin{enumerate}
  \item $H^*(G;k)$ has depth two;
  \item $G$ has exactly $75$ rank-two elementary abelian subgroups;
  \item the cohomology of the centralizer of every one of these subgroups has depth at least three.
\end{enumerate}
The first statement identifies the only associated-prime dimension that could realize Carlson's proposed equality. The second makes the subsequent centralizer check exhaustive rather than experimental, and the third rules out that dimension through Okuyama's theorem. Sections~\ref{sec:bridge}--\ref{sec:centralizers} establish these facts, and Section~\ref{sec:proof} assembles them.

\section{The associated-prime--centralizer bridge}\label{sec:bridge}

The theoretical bridge is the first assertion of Okuyama's
Theorem~0.1~\cite[Theorem~0.1, pp.~113--114]{1010000782043428097}.

\begin{theorem}[Okuyama]\label{thm:okuyama}
Let $K$ be a finite group and let $k$ be an algebraically closed field of characteristic $p>0$. If $\mathfrak p\in\Ass H^*(K;k)$ and
\[
  \dim H^*(K;k)/\mathfrak p=s,
\]
then there exists an elementary abelian $p$-subgroup $E\leq K$ of rank $s$ such that
\[
  \depth H^*(C_K(E);k)=s.
\]
Moreover,
\[
  \mathfrak p=\res^{-1}_{K,E}(\sqrt{0}).
\]
\end{theorem}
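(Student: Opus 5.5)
The plan is to localize $H^*(K;k)$ at the associated prime $\mathfrak p$ and to compare that localization with the cohomology of a centralizer. Three standard inputs are used: Quillen stratification, a localization theorem of Henn--Lannes--Schwartz type relating $H^*(K;k)$ to $H^*(C_K(E);k)$, and Duflot's freeness theorem for centralizers.

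\emph{Step 1: attach a subgroup to $\mathfrak p$.}
Associated primes of a graded ring are homogeneous, so Quillen's stratification applies to $\mathfrak p$. It produces an elementary abelian $p$-subgroup $E\le K$, unique up to conjugacy, and a prime $\mathfrak q$ of $H^*(E;k)$ lying in the open stratum of $E$ (away from all proper subgroups) with
\[
  \mathfrak p=\res_{K,E}^{-1}(\mathfrak q).
\]
Set $r=\rank E$. Then $\dim H^*(K;k)/\mathfrak p=\dim H^*(E;k)/\mathfrak q=s$, so $r\ge s$. Moreover $r=s$ exactly when $\mathfrak q$ is the nilradical of $H^*(E;k)$, that is, when $\mathfrak p=\res^{-1}_{K,E}(\sqrt0)$.

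\emph{Step 2: the localization comparison.}
Let $e_E$ be a product of Dickson-type classes whose restriction to $E$ avoids every proper subgroup. Then $e_E\notin\mathfrak p$. After inverting $e_E$, the localization theorem gives
\[
  H^*(K;k)[e_E^{-1}]\;\cong\;\bigl(H^*(C_K(E);k)[e_E^{-1}]\bigr)^{W},
  \qquad W=N_K(E)/C_K(E).
\]
Consequently $H^*(K;k)_{\mathfrak p}$ is the $W$-invariant subring of the semilocalization of $H^*(C_K(E);k)$ at the primes over $\mathfrak q$. I expect this step to be the main obstacle. It requires the full Lannes $T$-functor machinery, or Dwyer--Wilkerson's version, together with care about passing from the inverted ring to localization at a prime. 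I would first try to quote it from Henn--Lannes--Schwartz rather than reprove it.

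\emph{Step 3: depth bookkeeping.}
The element $a$ with $\ann(a)=\mathfrak p$ stays nonzero in $H^*(K;k)_{\mathfrak p}$, so this local ring has depth $0$.

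\begin{itemize}
\item By Duflot's theorem, $H^*(C_K(E);k)$ is free over a polynomial subalgebra $P$ in $r$ generators coming from the central subgroup $E$. Hence, localized at a prime lying over $\mathfrak q$, it has depth at least $\operatorname{ht}(\mathfrak q\cap P)=r-s$.
\item Suppose $r>s$. Then some element of the semilocalization is a non-zero-divisor, and its $W$-orbit norm is a $W$-invariant non-zero-divisor. This forces $H^*(K;k)_{\mathfrak p}$ to have depth at least $1$, a contradiction. Therefore $r=s$, which already gives $\mathfrak p=\res^{-1}_{K,E}(\sqrt0)$.
\end{itemize}

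\emph{Step 4: the depth of the centralizer.}
Duflot gives $\depth H^*(C_K(E);k)\ge s$, so it remains to show the depth is not larger. Suppose $\depth H^*(C_K(E);k)>s$. I would then build a regular sequence of length $s+1$ whose first $s$ terms are the Duflot polynomial generators. Localizing at the prime over the nilradical of $H^*(E;k)$, with $P$ inverted as far as possible, leaves a non-zero-divisor in the localization. Applying the same norm argument shows that $H^*(K;k)_{\mathfrak p}$ contains a non-zero-divisor, again contradicting depth zero. Hence $\depth H^*(C_K(E);k)=s$.

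A second, lesser difficulty is that depth of the global graded ring does not localize naively. If the argument above fails, the fallback is to use the graded local-cohomology characterization of depth: depth is the least $i$ with $H^i_{\mathfrak m}\neq0$. One then compares the local cohomology modules of $H^*(K;k)$ and $H^*(C_K(E);k)$ through the localization theorem in Step 2.
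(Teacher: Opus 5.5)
The paper gives no proof of this statement. It imports it from Okuyama's Theorem~0.1 and uses it as a black box, so your proposal has to stand on its own. The overall plan is reasonable: Quillen stratification, then a comparison of $H^*(K;k)_{\mathfrak p}$ with a localization of $H^*(C_K(E);k)$, then Duflot. However, the comparison you plan to quote in Step~2 is false, and Steps~3 and~4 are built on it.

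Take $K=D_8$ and $p=2$. Let $s$ be a noncentral involution, $z$ the central involution, and $E=\langle s\rangle$. Then $N_K(E)=C_K(E)=\langle s,z\rangle$, so $W=1$. Yet the $K$-conjugate $\langle sz\rangle$ of $E$ also lies in $C_K(E)$. Hence, for $\mathfrak p=\res^{-1}_{K,E}(\sqrt0)$, the two distinct primes $\res^{-1}_{C_K(E),\langle s\rangle}(\sqrt0)$ and $\res^{-1}_{C_K(E),\langle sz\rangle}(\sqrt0)$ both lie over $\mathfrak p$. Localizing your displayed isomorphism at $\mathfrak p$ would then identify the local ring $H^*(K;k)_{\mathfrak p}$ with a ring having two maximal ideals, whatever $e_E$ is.

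Even at a single one of these primes, the restriction map is not an isomorphism of local rings. It induces a degree-two Artin--Schreier extension of fraction fields, and in this example becomes an isomorphism only after completion. The underlying problem is that $K$-conjugates of $E$ lying in $C_K(E)$ need not be $N_K(E)$-conjugate. So ``$W$-invariants of the semilocalization'' is the wrong model, and the norm argument of Step~3 has no valid input.

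What you actually need is a depth-preserving comparison at a single prime. With $\mathfrak Q=\res^{-1}_{C_K(E),E}(\mathfrak q)$, this is the equality $\depth H^*(K;k)_{\mathfrak p}=\depth H^*(C_K(E);k)_{\mathfrak Q}$, obtained for instance from an isomorphism after completion, as in the example above. This is the real content of the theorem and must be proved or correctly cited. You rightly flagged Step~2 as the obstacle, but the version you intend to quote is not true.

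Granting the correct comparison, the rest is short, but your Step~4 should be replaced because as written it fails. The Duflot generators restrict to non-nilpotent classes on $E$, so they are units at $\mathfrak Q$, and the extra regular element need not lie in $\mathfrak Q$. Argue instead as follows:
\begin{enumerate}
\item $\depth H^*(K;k)_{\mathfrak p}=0$ gives $\mathfrak Q\in\Ass H^*(C_K(E);k)$.
\item Your flatness estimate over the Duflot polynomial subalgebra then gives $0\ge r-s$, hence $r=s$ and $\mathfrak q=\sqrt0$.
\item Applying \eqref{eq:standard-inequality} to $\mathfrak Q$ gives $\depth H^*(C_K(E);k)\le\dim H^*(C_K(E);k)/\mathfrak Q=s$, and Duflot's theorem for the central subgroup $E$ gives the reverse inequality.
\end{enumerate}
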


Only the forward implication is needed. The importance of the theorem here is that the integer $s$ appears twice: it is both the quotient dimension of the associated prime and the rank of the elementary abelian subgroup, and it is also the exact depth of that subgroup's centralizer. Consequently, a dimension-two associated prime would necessarily leave a detectable rank-two group-theoretic witness. This replaces an unknown prime ideal in $H^*(K;k)$ by the centralizer of a concrete subgroup of $K$.

\begin{proposition}[Finite reduction]\label{prop:finite-reduction}
Suppose $K$ is a finite group over an algebraically closed field $k$ of characteristic $p$, and suppose
\[
  \depth H^*(K;k)=2.
\]
If every rank-two elementary abelian $p$-subgroup $E\leq K$ satisfies
\[
  \depth H^*(C_K(E);k)\geq 3,
\]
then $\oa(H^*(K;k))\geq 3$, and $K$ disproves \eqref{eq:carlson}.
\end{proposition}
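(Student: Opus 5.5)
The argument is a direct application of Okuyama's Theorem~\ref{thm:okuyama} combined with the standard inequality \eqref{eq:standard-inequality}. Write $A=H^*(K;k)$. Since $A$ is a nonzero Noetherian ring, $\Ass A$ is finite and nonempty, so $\oa(A)$ is a well-defined integer. By \eqref{eq:standard-inequality}, every $\mathfrak p\in\Ass A$ satisfies $\dim A/\mathfrak p\geq \depth A=2$. It therefore suffices to show that no associated prime has $\dim A/\mathfrak p=2$ exactly. Then every associated prime has quotient dimension at least $3$, so $\oa(A)\geq 3$.

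Suppose, for contradiction, that $\mathfrak p\in\Ass A$ with $\dim A/\mathfrak p=2$. The field $k$ is algebraically closed of characteristic $p$, so Theorem~\ref{thm:okuyama} applies with $s=2$. It produces an elementary abelian $p$-subgroup $E\leq K$ of rank two with $\depth H^*(C_K(E);k)=2$. The hypothesis, however, gives $\depth H^*(C_K(E);k)\geq 3$ for every rank-two elementary abelian $p$-subgroup. This is a contradiction, so no dimension-two associated prime exists.

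Combining the two steps gives $\depth A=2<3\leq\oa(A)$. Hence $\depth H^*(K;k)\neq \oa(H^*(K;k))$, and \eqref{eq:carlson} fails for $K$.

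There is no real obstacle here; all the depth lies in Okuyama's theorem. The only points to check are bookkeeping. First, the hypotheses of Theorem~\ref{thm:okuyama} are matched: $k$ is algebraically closed, and "rank $s$" refers to the same $s$ as the quotient dimension. Second, the Krull dimension $\dim A/\mathfrak p$ used in $\oa$ is the same dimension that appears in Okuyama's statement. Third, in the $p=2$ setting $A$ is genuinely commutative, so $\Ass A$ and \eqref{eq:standard-inequality} have their usual meaning. For odd $p$ one would pass to the even-degree or reduced commutative subring, but this is not needed for the application to $G$.
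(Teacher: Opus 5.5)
Your proof is correct and matches the paper's argument: \eqref{eq:standard-inequality} forces every associated prime to have quotient dimension at least two, and Okuyama's Theorem~\ref{thm:okuyama} with $s=2$ rules out dimension exactly two against the centralizer hypothesis, giving $\oa\geq 3>2=\depth$. Your extra remarks on the nonemptiness of $\Ass A$ and the graded-commutative caveat for odd $p$ are harmless bookkeeping that the paper leaves implicit.
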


\begin{proof}
Equation~\eqref{eq:standard-inequality} gives $\oa\geq2$. To prove the stronger bound $\oa\geq3$, it is therefore enough to rule out equality. Suppose, for contradiction, that $\oa=2$. By the definition of $\oa$, there is an associated prime $\mathfrak p$ with $\dim H^*(K;k)/\mathfrak p=2$. Theorem~\ref{thm:okuyama} then produces a rank-two elementary abelian subgroup $E\leq K$ satisfying
\[
  \depth H^*(C_K(E);k)=2.
\]
This contradicts the assumed lower bound of three for every such centralizer. Hence no associated prime has quotient dimension two. Since quotient dimensions are integers and $\oa\geq2$, it follows that $\oa\geq3$, whereas $\depth H^*(K;k)=2$ by hypothesis.
\end{proof}

For four of the six centralizer types, the required lower bound follows directly from Duflot's theorem.

\begin{theorem}[Duflot]\label{thm:duflot}
For a finite $p$-group $P$,
\[
  \depth H^*(P;k)\geq \rank_p Z(P),
\]
where $\rank_p Z(P)$ is the largest rank of an elementary abelian $p$-subgroup contained in the center.
\end{theorem}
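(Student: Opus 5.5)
The plan is the comodule argument of Broto and Henn, which gives a short route to Duflot's bound. Fix an elementary abelian subgroup $Z\leq Z(P)$ of rank $r=\rank_p Z(P)$. The aim is to find homogeneous classes $\zeta_1,\ldots,\zeta_r\in H^*(P;k)$ that form a regular sequence. Since $Z$ is central, the multiplication map $\mu\colon Z\times P\to P$, $(z,g)\mapsto zg$, is a group homomorphism. Via the K\"unneth theorem it induces a coaction
\[
  \mu^*\colon H^*(P;k)\longrightarrow H^*(Z;k)\otimes H^*(P;k),
\]
which makes $H^*(P;k)$ a comodule algebra over the Hopf algebra $H^*(Z;k)$. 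Composing $\mu^*$ with restriction to $Z\times 1$ recovers $\res_{P,Z}$.

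\textbf{Step 1 (parameters restricting well to $Z$).} Choose a faithful $kP$-module $V$, or use Evens norm maps. This produces classes $\zeta_1,\ldots,\zeta_r\in H^*(P;k)$ whose restrictions to $Z$ are, up to a change of basis, the powers $x_1^{p^{n}},\ldots,x_r^{p^{n}}$ of the polynomial generators of $H^*(Z;k)/\sqrt0$. One can also use the top Chern classes of $V$ restricted to the summands of $V|_Z$, or Dickson invariants, for the same purpose. After raising to a further $p$-power if necessary, these restrictions are primitive in $H^*(Z;k)$. Moreover, $H^*(Z;k)$ is then a finitely generated free module over $k[\res\zeta_1,\ldots,\res\zeta_r]$.

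\textbf{Step 2 (the coaction on the $\zeta_i$).} Next I compute $\mu^*(\zeta_i)$. By naturality of the chosen construction and primitivity, I expect
\[
  \mu^*(\zeta_i)=\res_{P,Z}(\zeta_i)\otimes 1+1\otimes\zeta_i.
\]
If this fails on the nose, it should hold modulo terms of lower $H^*(Z)$-degree, and that weaker form still suffices for a filtration argument.

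\textbf{Step 3 (freeness).} I then filter $H^*(P;k)$ by the coaction, or equivalently use the Lyndon--Hochschild--Serre spectral sequence of $Z\to P\to P/Z$, whose $E_2$-term is $H^*(P/Z;k)\otimes H^*(Z;k)$ because $Z$ is central. The goal is to show that $H^*(P;k)$ is a free module over the polynomial ring $k[\zeta_1,\ldots,\zeta_r]$. The mechanism is that $\mu^*$ detects $H^*(P;k)$ as a sub-comodule of a cofree comodule $H^*(Z;k)\otimes N$. Over $k[\res\zeta_i]$ the module $H^*(Z;k)\otimes N$ is free, and a Milnor--Moore-type argument for comodules over a polynomial-type Hopf algebra then transfers freeness to $H^*(P;k)$.

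Freeness over $k[\zeta_1,\ldots,\zeta_r]$ immediately makes $\zeta_1,\ldots,\zeta_r$ a regular sequence in $H^*(P;k)_+$. This gives $\depth H^*(P;k)\geq r$. The bound holds over any field $k$, and in particular over $\kbar$.

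The main obstacle is Step 3: passing from freeness at the level of $H^*(Z;k)$ to genuine freeness of $H^*(P;k)$. Because $\zeta_i$ is only primitive modulo nilpotents, one must control how the coaction interacts with the nilpotent part of $H^*(Z;k)$ when $p=2$. I would first handle this by working in the associated graded for the coaction filtration, where the $\zeta_i$ become honestly primitive. I would then lift regularity back using that the filtration is finite in each degree. If that approach stalls, the fallback is Carlson's spectral-sequence argument. There one shows that the images of the $\zeta_i$ in $E_\infty$ form a regular sequence, because they restrict to parameters on the fibre column. Regularity then lifts from $E_\infty$ to $H^*(P;k)$ by a standard filtration lemma.
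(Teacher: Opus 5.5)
The paper does not prove this statement; it quotes it from Duflot's 1981 paper. Your comodule route is a legitimate alternative, and Step~1 is fine: the Evens norm already gives $\res_{P,Z}\mathcal{N}_Z^P(x_i)=x_i^{|P:Z|}$ on the nose. Step~3, however, which you yourself flag, has a genuine gap, and the mechanism you propose does not close it. Write $A=H^*(Z;k)$ and $M=H^*(P;k)$.

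The first problem is linearity. $\mu^*$ is not linear for $k[\zeta_1,\ldots,\zeta_r]$ acting on $A\otimes M$ through $\res\zeta_i\otimes1$. Indeed $\mu^*(\zeta_i m)=\mu^*(\zeta_i)\mu^*(m)$, and $\mu^*(\zeta_i)$ generally has components in $A^s\otimes M^{|\zeta_i|-s}$ with $0<s<|\zeta_i|$. The formula in Step~2 fails even for norm classes. Take the central subgroup $Z=\mathbb{Z}/2\times1$ of $P=\mathbb{Z}/2\times\mathbb{Z}/2$, with $H^*(P;\F_2)=\F_2[x,h]$ and $x,h$ dual to the two factors. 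Checking restrictions to the three subgroups of order two gives $\mathcal{N}_Z^P(x)=x^2+xh$, and $\mu^*(x^2+xh)$ contains the term $x\otimes h$.

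The second problem is that, even with the correct linearity, realizing $M$ as a submodule (or sub-comodule) of a free module only shows that $\zeta_1$ is a non-zero-divisor. Regularity of $\zeta_2,\ldots,\zeta_r$ does not pass to submodules, since $M/\zeta_1M\to(A\otimes M)/\zeta_1(A\otimes M)$ need not be injective. Primitivity and Milnor--Moore play no role. The worry about nilpotents for $p=2$ is moot, because $H^*(Z;\F_2)$ is a polynomial ring. Your spectral-sequence fallback has the same hole. The $\res\zeta_i$ are regular on $E_2$, but the homology of a complex of free $k[\res\zeta_1,\ldots,\res\zeta_r]$-modules need not be free. So regularity on $E_\infty$ is exactly what remains to be proved.

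The missing idea is a linear splitting. Let $k[\zeta]=k[\zeta_1,\ldots,\zeta_r]$ act on $A\otimes M$ \emph{through the ring map} $\mu^*$, so that $\mu^*$ is $k[\zeta]$-linear. Restriction along $g\mapsto(1,g)$ is $\varepsilon\otimes1$. It is a ring homomorphism with $(\varepsilon\otimes1)\circ\mu^*=\mathrm{id}_M$, hence also $k[\zeta]$-linear, so $M$ is a $k[\zeta]$-direct summand of $A\otimes M$.

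To see that $A\otimes M$ is free for this twisted action, filter it by $F_s=\bigoplus_{t\leq s}A^t\otimes M$. Restriction along $z\mapsto(z,1)$ shows that the top $A$-component of $\mu^*(\zeta_i)$ is $\res\zeta_i\otimes1$. This is your weak form of Step~2, and it holds automatically, with no primitivity needed. Hence $\mathrm{gr}(A\otimes M)$ is $A\otimes M$ with $\zeta_i$ acting as $\res\zeta_i\otimes1$. That module is free because $A$ is free over $k[\res\zeta_1,\ldots,\res\zeta_r]$. The filtration is finite in each degree, so a bihomogeneous basis of the associated graded, viewed in $A\otimes M$, is a basis for the twisted action. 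A graded direct summand of a bounded-below free module over a connected graded polynomial ring is free. Therefore $M$ is free over $k[\zeta]$, and $\depth M\geq r$.

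Completed this way, your argument is self-contained and proves the stronger Broto--Henn freeness statement, whereas the paper simply imports the inequality from Duflot.
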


This is the group-cohomological depth bound proved
in~\cite{Duflot_1981}. It is especially useful here because the center
rank is a finite-group invariant that can be computed directly, while
the depth of a cohomology ring is generally more difficult to determine.
Whenever $\rank_p Z(P)\geq3$, Duflot's theorem immediately supplies the
lower bound required in Proposition~\ref{prop:finite-reduction}. The two
centralizers whose centers have rank only two are the only cases for
which this automatic argument is insufficient, and they therefore
require separate ring-theoretic certificates.

\section{The finite group and its rank-two elementary abelian subgroups}\label{sec:enumeration}

\subsection{Why this group is a natural candidate}

Green and King computed the mod-two cohomology rings of all $2328$
groups of order $128$~\cite{Green_2011}. Their catalogue records
\[
  \dim H^*(\SG{128}{859};\F_2)=4,
  \qquad
  \depth H^*(\SG{128}{859};\F_2)=2.
\]
The gap between Krull dimension four and depth two makes this group a natural place to search: the ring is not Cohen--Macaulay, so its depth does not already equal its full dimension. Nevertheless, the catalogue entry alone does not disprove Carlson's conjecture. The conjecture would still hold if even one associated prime had quotient dimension two. By Okuyama's theorem, such a prime would force a rank-two elementary abelian subgroup with a depth-two centralizer. The remaining group-specific task is therefore to classify all rank-two elementary abelian subgroups and control the depth of every corresponding centralizer.

\subsection{Complete enumeration}

Let $G=\SG{128}{859}$, equivalently the finite group presented in Appendix~\ref{app:group-presentation}. Because the prime is $2$, a rank-two elementary abelian subgroup is simply a Klein four subgroup. The enumeration therefore uses only the defining properties ``order two'' and ``commuting,'' rather than a heuristic search or a database list. It proceeds as follows:
\begin{enumerate}
  \item list every nonidentity involution $x\in G$;
  \item for each commuting pair $x\neq y$, form $E=\langle x,y\rangle$;
  \item retain $E$ precisely when $|E|=4$, and deduplicate using its full four-element set;
  \item compute $C_G(E)$, its Small Groups identifier, and the $2$-rank of its center.
\end{enumerate}

\begin{lemma}\label{lem:enumeration-complete}
The procedure above enumerates every rank-two elementary abelian subgroup of $G$.
\end{lemma}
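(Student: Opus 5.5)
The plan is a direct two-sided argument: every Klein four subgroup $E\leq G$ is produced by the procedure, and everything the procedure outputs is a Klein four subgroup, each counted once.

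\textbf{Soundness.} Take a retained set $E=\langle x,y\rangle$, where $x$ and $y$ are commuting involutions with $x\neq y$. Since $x$ and $y$ commute, $E$ is abelian. It is generated by elements of order two, so every element squares to the identity, and $E$ is elementary abelian. Its order therefore divides $4$. Step (3) keeps $E$ only when $|E|=4$, so every output is elementary abelian of rank two. Deduplication compares the full four-element subsets of $G$, and two subgroups are equal exactly when their underlying sets are equal. Hence distinct outputs are distinct subgroups, and the list has no repetitions.

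\textbf{Completeness.} Let $E\leq G$ be elementary abelian of rank two. Then $E=\{1,a,b,ab\}$, where $a$, $b$ and $ab$ are pairwise distinct involutions. All three appear in the list of step (1), since that list contains every nonidentity element of order two. The elements $a$ and $b$ commute and satisfy $a\neq b$, so the pair $(a,b)$ is examined in step (2). It produces $\langle a,b\rangle=E$, which has order $4$ and is retained in step (3). Therefore $E$, or an equal set already recorded, appears in the deduplicated output.

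\textbf{Where the real content lies.} The argument above is purely formal, so the substantive point is the correctness of the inputs. Step (1) must genuinely list all involutions of $G$, and the commuting test must be exact. I would secure this by working in the concrete presentation of Appendix~\ref{app:group-presentation}, or a faithful permutation or polycyclic representation of it, where element enumeration is exhaustive and exact because $|G|=128$ is finite. As cross-checks, I would use two counts. The number of commuting ordered pairs of distinct involutions yielding order-$4$ subgroups must equal $6\cdot 75$, because each Klein four group arises from exactly $3\cdot 2$ ordered pairs. The total count $75$ should also agree with an independent count computed from conjugacy classes of subgroups. I expect this computational-verification step, rather than the logic, to be the only real obstacle.
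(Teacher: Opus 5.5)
Your proof is correct and follows the paper's argument essentially step for step. It rests on the same three points: commuting distinct involutions generate an elementary abelian group, which is retained only at order four; any two nonidentity elements of a Klein four group generate it, so every such subgroup is hit; and deduplication by underlying four-element sets separates distinct subgroups. Your remarks on exactness of the computational inputs, including the $6\cdot 75$ ordered-pair cross-check, go beyond what the paper's proof addresses and are not needed for the lemma as stated.
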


\begin{proof}
A rank-two elementary abelian $2$-group is isomorphic to $C_2\times C_2$. Its three nonidentity elements are commuting involutions, and any two distinct nonidentity elements generate the whole subgroup. Hence every rank-two elementary abelian subgroup appears from each of its three unordered generating pairs, and in particular appears at least once in the enumeration. Conversely, a retained subgroup has order four and is generated by commuting involutions, so every nonidentity element has order two and the subgroup is isomorphic to $C_2\times C_2$. Finally, representing a subgroup by its full four-element set removes the repeated generating pairs while preserving distinct subgroups. Thus the output is both exhaustive and free of duplicates.
\end{proof}

The exact output is summarized in Table~\ref{tab:centralizers}.

\begin{table}[ht]
\centering
\caption{Centralizers of rank-two elementary abelian subgroups of $G$. Counts are over actual subgroups, not only conjugacy-class representatives.}
\label{tab:centralizers}
\begin{tabular}{@{}lrrl@{}}
\toprule
Centralizer type & Number of $E$ & $\rank_2 Z(C_G(E))$ & Depth bound used \\
\midrule
$\SG{16}{10}$  & 12 & 3 & Duflot: $3$ \\
$\SG{16}{14}$  & 32 & 4 & Duflot: $4$ \\
$\SG{32}{22}$  & 24 & 3 & Duflot: $3$ \\
$\SG{32}{46}$  &  4 & 3 & Duflot: $3$ \\
$\SG{64}{90}$  &  2 & 2 & Exact regular sequence: $3$ \\
$\SG{64}{216}$ &  1 & 2 & Exact regular sequence: $3$ \\
\midrule
Total & 75 & & \\
\bottomrule
\end{tabular}
\end{table}

The table has two roles. Its multiplicities certify that all $75$ actual subgroups, rather than only one representative from each conjugacy class, were included. Its last two columns determine how the depth lower bound will be proved for each possible centralizer. There are $31$ involutions and $21$ conjugacy orbits among the $75$ subgroups. Subgroups in the same conjugacy orbit have isomorphic centralizers, but retaining the full subgroup count makes the exhaustiveness of the enumeration transparent.

As a negative control, the same implementation applied to the adjacent identifier $\SG{128}{858}$ produces $23$ involutions, $31$ rank-two subgroups, and neither exceptional order-$64$ centralizer type. This comparison is not used in the proof; it checks that the output is sensitive to the input group and is not a hard-coded or group-independent pattern.

\section{An exact depth-two certificate for the ambient group}\label{sec:ambient-depth}

\subsection{The source presentation}

The Green--King catalogue provides the weighted presentation
\begin{equation}\label{eq:ambient-presentation}
\begin{aligned}
A_0=H^*(G;\F_2)=\F_2[&a_{1,0},b_{1,1},b_{1,2},b_{2,4},b_{2,5},b_{2,6},b_{3,11},\\
&b_{5,24},b_{5,25},b_{6,35},b_{6,36},b_{7,49},c_{8,65}]/I,
\end{aligned}
\end{equation}
where the first subscript is the cohomological degree and $I$ is the $44$-relation ideal reproduced in Appendix~\ref{app:ambient-ring}. The grading is essential: all proposed regular elements and all relations are homogeneous with respect to these degrees. Thus the argument below uses the explicit algebraic presentation and exact polynomial arithmetic, not merely the catalogue's reported depth value.

\subsection{Colon ideals and a socle witness}

For an ideal $J\subseteq S$ and an element $f\in S$, recall that
\[
  (J:f)=\{g\in S:gf\in J\}.
\]
Modulo $J$, the quotient $(J:f)/J$ is exactly the kernel of multiplication by the class of $f$. Thus the equality $(J:f)=J$ is a finite algebraic certificate that $f$ is a non-zero-divisor in $S/J$.

In the polynomial ring of \eqref{eq:ambient-presentation}, define
\begin{equation}\label{eq:f1-f2}
\begin{aligned}
  f_1&=c_{8,65},\\
  f_2&=b_{1,2}b_{3,11}+b_{1,2}^4+b_{1,1}^4+b_{2,6}b_{1,1}b_{1,2}
      +b_{2,6}^2+b_{2,4}b_{1,2}^2+b_{2,4}^2.
\end{aligned}
\end{equation}
Exact ideal quotients over $\F_2$ give
\begin{equation}\label{eq:ambient-colons}
  (I:f_1)=I,
  \qquad
  \bigl((I+(f_1)):f_2\bigr)=I+(f_1).
\end{equation}
The first equality says that multiplication by $f_1$ is injective on $A_0$. After quotienting by $f_1$, the second equality says that multiplication by the image of $f_2$ is still injective. Since the final quotient is nonzero, these two equalities prove that $f_1,f_2$ is an $A_0$-regular sequence and hence that $\depth A_0\geq2$.

After quotienting by $f_1,f_2$, set
\begin{equation}\label{eq:socle-witness}
  w=b_{2,4}b_{2,5}^2+b_{2,5}^3+a_{1,0}b_{5,24}.
\end{equation}
The reduced normal form of $w$ is nonzero, whereas the reduced normal form of $xw$ is zero for each of the thirteen positive-degree algebra generators $x$. Because these generators generate the irrelevant ideal of the quotient, every positive-degree element annihilates the class of $w$. Equivalently,
\[
  0\neq w\in\Soc\bigl(A_0/(f_1,f_2)\bigr).
\]
This nonzero socle class shows that every positive-degree element of the quotient is a zero divisor. Therefore no regular sequence of positive length can begin in the quotient, and its depth is zero. The colon equalities provide the lower bound for $\depth A_0$, while the socle witness supplies the matching upper bound.

\begin{lemma}[Certificate interpretation]\label{lem:certificate}
Let $R=S/J$ for a polynomial ring $S$, and let $f\in S$. Multiplication by the class of $f$ is injective on $R$ if and only if $(J:f)=J$. If a nonzero element of a connected graded ring is annihilated by every positive-degree algebra generator, then the ring has depth zero.
\end{lemma}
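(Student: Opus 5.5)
The lemma has two independent parts, and I will prove each directly from the definitions.

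\emph{The colon-ideal criterion.} Write $\bar g$ for the class of $g\in S$ in $R=S/J$. For any $g\in S$ we have $\bar f\,\bar g=0$ in $R$ exactly when $fg\in J$, that is, exactly when $g\in(J:f)$. The kernel of multiplication by $\bar f$ on $R$ is therefore the image of $(J:f)$ in $R$, namely $(J:f)/J$; here $J\subseteq(J:f)$ holds automatically because $J$ is an ideal. It follows that multiplication by $\bar f$ is injective if and only if $(J:f)/J=0$, that is, if and only if $(J:f)=J$. This part is pure bookkeeping. Its only role is to say that the two equalities in \eqref{eq:ambient-colons} are literally statements of injectivity, which are exactly the conditions in Definition~\ref{def:depth}.

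\emph{The socle criterion.} Let $R$ be connected graded and let $0\neq w\in R$ satisfy $xw=0$ for every positive-degree algebra generator $x$. I first show that $R_+w=0$. Any homogeneous element of $R_+$ is a $k$-linear combination of monomials in the generators. Each such monomial has positive degree, because $R_0=k$, so it contains at least one positive-degree generator as a factor. Since $R$ is commutative, that generator can be moved next to $w$, and the monomial kills $w$. Hence every element of $R_+$ annihilates $w$, and so every homogeneous $x\in R_+$ is a zero divisor. No regular sequence of positive length can therefore start. The condition that the final quotient be nonzero holds for the empty sequence because $R\neq0$, as $w\neq0$. By Definition~\ref{def:depth}, $\depth R=0$. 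Commutativity is available because $p=2$ in the situation considered; for odd $p$, graded commutativity would also suffice, since it only introduces signs.

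\emph{Application and main obstacle.} Combining the two parts gives the conclusion used in the paper. The colon equalities make $f_1,f_2$ a regular sequence, provided the final quotient is nonzero; this is witnessed by $w\neq0$. The socle witness then shows that $A_0/(f_1,f_2)$ has depth $0$. To conclude $\depth A_0=2$ exactly, I would invoke the standard fact that in a graded Noetherian ring every maximal homogeneous regular sequence has length equal to the depth. The regular sequence $f_1,f_2$ cannot be extended, so it is maximal, and this yields $\depth A_0=2$. The one point requiring care is this step, since Definition~\ref{def:depth} defines depth as a maximum length rather than as the common length of all maximal sequences. I would justify it via the $\operatorname{Ext}$ characterization of depth, which shows that all maximal homogeneous regular sequences in $A_+$ have the same length. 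Everything else in the lemma is elementary.
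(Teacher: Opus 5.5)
Your proposal is correct and follows the paper's proof. You identify the kernel of multiplication by $\bar f$ with $(J:f)/J$, and you extend annihilation by the positive-degree generators to all of $R_+$, so every homogeneous positive-degree element is a zero divisor and the depth is zero. Your final paragraph on $\depth A_0=2$ goes beyond this lemma; the paper treats that step separately in Proposition~\ref{prop:ambient-depth} via the exact depth drop along a regular sequence, which is equivalent to your appeal to the equal length of maximal homogeneous regular sequences.
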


\begin{proof}
An element $g+J\in S/J$ lies in the kernel of multiplication by $f+J$ precisely when $gf\in J$, or equivalently when $g\in(J:f)$. Hence the kernel is $(J:f)/J$, proving the first assertion. For the second, if a nonzero class is annihilated by all positive-degree algebra generators, then it is annihilated by the entire irrelevant ideal. Every positive-degree element is therefore a zero divisor, so no homogeneous regular sequence of positive length can begin and the depth is zero.
\end{proof}

The exact weighted Gr\"obner-basis calculation is summarized in Table~\ref{tab:ambient-certificate}. Each zero remainder means that every generator of the relevant ideal quotient reduces into the current ideal; the nonzero normal form of $w$ verifies that the socle witness has not vanished in the quotient.

\begin{table}[ht]
\centering
\caption{Exact algebraic checks for the ambient cohomology ring.}
\label{tab:ambient-certificate}
\begin{tabular}{@{}lr@{}}
\toprule
Exact check & Result \\
\midrule
Reduced Gr\"obner basis size for $I$ & 362 \\
Remainders for $(I:f_1)/I$ & 0 \\
Remainders for $((I+(f_1)):f_2)/(I+(f_1))$ & 0 \\
Nonzero socle classes after $f_1,f_2$ & 1 \\
Remainders of $w$ times all $13$ generators & 0 \\
Remainders after mutating $w$ to $w+a_{1,0}$ & 9 \\
\bottomrule
\end{tabular}
\end{table}

The mutation is not part of the proof. It is a negative control showing that the normal-form test does not automatically declare a nearby expression to be a socle class: replacing $w$ by $w+a_{1,0}$ produces nine nonzero products. This helps distinguish the exact certificate from an implementation that would return zero remainders indiscriminately.

\begin{proposition}\label{prop:ambient-depth}
\[
  \depth H^*(G;\F_2)=2.
\]
\end{proposition}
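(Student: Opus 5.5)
The plan is to combine the two halves of the certificate already recorded in Section~\ref{sec:ambient-depth}, using Lemma~\ref{lem:certificate} to turn each exact computation into a ring-theoretic statement about $A_0=H^*(G;\F_2)=S/I$, where $S$ is the weighted polynomial ring of \eqref{eq:ambient-presentation}.

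\textbf{Lower bound.} By the first half of Lemma~\ref{lem:certificate}, the colon equality $(I:f_1)=I$ in \eqref{eq:ambient-colons} says that $f_1=c_{8,65}$ is a non-zero-divisor on $A_0$. Applying the same lemma to $J=I+(f_1)$, the equality $((I+(f_1)):f_2)=I+(f_1)$ says that $f_2$ is a non-zero-divisor on $A_0/(f_1)$. Both $f_1$ and $f_2$ are homogeneous of positive degree, namely degrees $8$ and $4$. The quotient $A_0/(f_1,f_2)$ is nonzero because the class of $w$ survives in it. Hence $f_1,f_2$ is a regular sequence in the sense of Definition~\ref{def:depth}, and $\depth A_0\ge 2$.

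\textbf{Upper bound.} The second half of Lemma~\ref{lem:certificate}, applied to the socle witness $w$ of \eqref{eq:socle-witness}, gives $\depth A_0/(f_1,f_2)=0$. To return to $A_0$ itself, I would use the standard fact that in a Noetherian connected graded ring, quotienting by a homogeneous regular element lowers depth by exactly one. Thus
\[
\depth A_0=\depth A_0/(f_1,f_2)+2=2.
\]
To stay within the paper's homogeneous definition, this fact can be stated via $\operatorname{Ext}$ and the irrelevant ideal $\mathfrak m$: depth is the least $i$ with $\operatorname{Ext}^i_{A_0}(k,A_0)\neq0$. It can also be stated through the equivalent fact that all maximal homogeneous regular sequences have the same length. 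A nonzero socle class in $A_0/(f_1,f_2)$ means $\operatorname{Hom}(k,A_0/(f_1,f_2))\neq0$. The long exact sequences for $0\to A_0\xrightarrow{f_1}A_0\to A_0/(f_1)\to0$ and the analogous sequence for $f_2$ then give $\operatorname{Ext}^2(k,A_0)\neq0$, because multiplication by $f_i$ acts as zero on $\operatorname{Ext}(k,-)$. Together with the vanishing of $\operatorname{Ext}^0$ and $\operatorname{Ext}^1$, which follows from the regular sequence, this gives $\depth A_0=2$.

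\textbf{Main obstacle.} The step needing the most care is this ``every maximal regular sequence has the same length'' argument. Definition~\ref{def:depth} defines depth as a maximum over all sequences, so a socle witness for one particular sequence is not by itself an upper bound. The Rees/Ext characterization handles this cleanly in the graded Noetherian setting. The remaining points are routine:
\begin{itemize}
\item the thirteen products $xw$ in the certificate range over the algebra generators, which generate $\mathfrak m$ as an ideal;
\item the normal forms are computed with respect to a Gr\"obner basis of $I+(f_1,f_2)$, so a zero remainder really means membership in that ideal.
\end{itemize}

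Finally, the ground field is $\F_2$ here, whereas Theorem~\ref{thm:counterexample} concerns $k=\kbar$. Depth is invariant under the faithfully flat extension $\F_2\to\kbar$, and $H^*(G;\kbar)=H^*(G;\F_2)\otimes\kbar$, so the conclusion transfers.
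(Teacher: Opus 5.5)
Your proposal is correct and follows the paper's proof. The colon equalities in \eqref{eq:ambient-colons} make $f_1,f_2$ a regular sequence, the socle witness $w$ gives $\depth A_0/(f_1,f_2)=0$, and the fact that depth drops by exactly one for each regular element then yields $\depth A_0=2$. The paper only asserts that drop-by-one fact as standard, whereas you justify it through the Rees/$\operatorname{Ext}$ isomorphism $\operatorname{Hom}_{A_0}(k,A_0/(f_1,f_2))\cong\operatorname{Ext}^2_{A_0}(k,A_0)$, which correctly supplies the upper bound over all regular sequences.
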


\begin{proof}
The two colon equalities in \eqref{eq:ambient-colons} show successively that $f_1$ is a non-zero-divisor on $A_0$ and that $f_2$ is a non-zero-divisor on $A_0/(f_1)$. Hence $f_1,f_2$ is regular and $\depth A_0\geq2$. On the other hand, the nonzero socle class \eqref{eq:socle-witness} proves
\[
  \depth A_0/(f_1,f_2)=0.
\]
For a regular sequence of length two, depth drops by exactly two on passage to the quotient. Therefore
\[
  \depth A_0=2+\depth A_0/(f_1,f_2)=2.
\]
\end{proof}

\subsection{Extension of the coefficient field}

The explicit ring presentations and Gr\"obner-basis computations are over the finite field $\F_2$, whereas Okuyama's theorem is stated for an algebraically closed coefficient field. The following standard base-change fact allows the exact computation to be carried out over $\F_2$ and then transferred to $\kbar$ without changing depth.

\begin{lemma}\label{lem:field-extension}
If $K/k$ is a field extension and $B$ is a finitely generated connected graded $k$-algebra, then
\[
  \depth(K\otimes_k B)=\depth B.
\]
For a finite group,
\[
  H^*(G;K)\cong K\otimes_k H^*(G;k).
\]
\end{lemma}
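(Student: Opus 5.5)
The proof splits into the two displayed assertions: an algebraic one about depth under base change, and a cohomological one about extending the coefficient field.

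For the cohomology isomorphism, I will compute $H^*(G;K)$ from the standard bar resolution $P_\bullet\to k$ of free $kG$-modules of finite rank. With $K$-coefficients, the cochain complex is $\operatorname{Hom}_{kG}(P_n,K)\cong K\otimes_k\operatorname{Hom}_{kG}(P_n,k)$; this holds because each $P_n$ is finitely generated free, so $\operatorname{Hom}$ commutes with the flat extension $-\otimes_k K$. Since $K$ is flat over $k$ (a field extension), cohomology commutes with $K\otimes_k-$. Cup products are computed by a diagonal approximation on $P_\bullet$ that is defined over $k$, so the isomorphism is one of graded $K$-algebras.

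For the depth statement, set $B_K=K\otimes_kB$; this is faithfully flat over $B$. I will characterize depth via graded Koszul homology: for homogeneous generators $x_1,\dots,x_n$ of $B_+$, the number $\depth B$ equals $n$ minus the largest $i$ with $H_i(x;B)\neq0$. Equivalently, one can use the vanishing of $\operatorname{Ext}^i_B(k,B)$, or local cohomology $H^i_{B_+}(B)$. The images $1\otimes x_j$ generate $(B_K)_+$, and the Koszul complex satisfies
\[
K(x;B_K)=K\otimes_kK(x;B),
\]
so $H_i(x;B_K)=K\otimes_kH_i(x;B)$. This is nonzero exactly when $H_i(x;B)$ is nonzero, because $K$ is faithfully flat over $k$ and a nonzero $k$-vector space stays nonzero after tensoring. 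Hence the two depths agree.

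The main point of care is justifying the Koszul (or local cohomology) characterization of depth in the graded, non-local setting. Here I will invoke the standard fact that for a finitely generated connected graded algebra, depth with respect to homogeneous regular sequences equals the $B_+$-grade. That grade is computed by Koszul homology on any generating set of $B_+$, as in Bruns--Herzog, Chapter 1.5/1.6, and the fact is used after localizing at the homogeneous maximal ideal. A more elementary alternative for the inequality $\depth B_K\geq\depth B$ is direct: a $B$-regular sequence stays regular after the flat base change, since the colon equality $(J:f)=J$ is preserved by flat extension. The reverse inequality needs the Koszul argument, or a socle witness surviving base change as in Lemma~\ref{lem:certificate}. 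In the case $K=\kbar$ applied to the certificates of Section~\ref{sec:ambient-depth}, both directions come for free: the colon equalities and the nonzero socle class persist over $\kbar$.
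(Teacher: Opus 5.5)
Your proposal is correct and follows the paper's own argument. For depth, you use faithful flatness of $K$ over $k$ on the Koszul complex of homogeneous generators of $B_+$, so the extremal nonvanishing Koszul homology sits in the same position. For the cohomology isomorphism, you base-change the cochain complex of a free resolution. You also make explicit several points the paper leaves tacit: the resolution must consist of finitely generated free modules so that $\operatorname{Hom}_{kG}(P_n,K)\cong K\otimes_k\operatorname{Hom}_{kG}(P_n,k)$; the isomorphism respects cup products; and depth measured by homogeneous regular sequences equals the $B_+$-grade computed by Koszul homology.
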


\begin{proof}
A field extension is faithfully flat. Choose homogeneous generators of the irrelevant ideal of $B$ and compute depth using the first nonzero homology of the associated Koszul complex. Tensoring that complex with $K$ preserves and reflects exactness, so the first nonzero Koszul homology occurs in the same position before and after base change. Hence $\depth(K\otimes_kB)=\depth B$. For group cohomology, take a projective resolution of the trivial $kG$-module and tensor the corresponding cochain complex with $K$. Flatness commutes with cohomology and gives $H^*(G;K)\cong K\otimes_kH^*(G;k)$.
\end{proof}

Taking $K=\kbar$ in Lemma~\ref{lem:field-extension}, Proposition~\ref{prop:ambient-depth} gives
\begin{equation}\label{eq:ambient-depth-over-kbar}
  \depth H^*(G;\kbar)=2.
\end{equation}

\section{Depth of every rank-two centralizer}\label{sec:centralizers}

\subsection{Four types handled by their centers}

For the centralizer types
\[
  \SG{16}{10},\quad \SG{16}{14},\quad \SG{32}{22},\quad \SG{32}{46},
\]
the center ranks are respectively $3,4,3,3$. Theorem~\ref{thm:duflot} therefore gives mod-two cohomological depth at least $3,4,3,3$, respectively. In particular, each depth is at least three, which is precisely the threshold needed in Proposition~\ref{prop:finite-reduction}. No presentation of these four cohomology rings is required. Lemma~\ref{lem:field-extension} gives the same lower bounds over $\kbar$.

\subsection{The two exceptional order-\texorpdfstring{$64$}{64} types}

The center of each of $\SG{64}{90}$ and $\SG{64}{216}$ has rank two. Duflot's theorem therefore gives only depth at least two, which does not rule out the depth-two centralizer predicted by Okuyama's theorem. These are the only exceptional cases in Table~\ref{tab:centralizers}. Their generator degrees and relation ideals are reproduced in Appendix~\ref{app:exceptional-rings}; the calculations below establish the stronger lower bound directly from those presentations rather than relying on the catalogue's displayed depth fields.

For $\SG{64}{90}$, an exact regular sequence is
\begin{equation}\label{eq:sequence-90}
  c_{1,2},\qquad c_{4,21},\qquad b_{1,1}^2+b_{2,6}+b_{2,5}+b_{2,4}.
\end{equation}
For $\SG{64}{216}$, an exact regular sequence is
\begin{equation}\label{eq:sequence-216}
  c_{2,8},\qquad c_{4,25},\qquad
  b_{1,3}^2+b_{1,2}b_{1,3}+b_{1,2}^2+b_{1,0}^2.
\end{equation}
For each ring, three successive tests are performed. The first colon equality verifies that the first displayed element is a non-zero-divisor. The second test is carried out after adjoining the first element to the relation ideal, and the third after adjoining the first two. At every stage, the generators of the ideal quotient reduce to zero modulo the current ideal. Thus all six colon equalities have the form required by Lemma~\ref{lem:certificate}. Each displayed triple is therefore a regular sequence, proving depth at least three for both cohomology rings.

As negative controls, replacing the first elements in \eqref{eq:sequence-90} and \eqref{eq:sequence-216} by $a_{1,0}$ and $b_{1,0}$, respectively, produces nonzero colon remainders. In other words, the same procedure correctly detects that these substituted elements are zero divisors. These controls test the implementation and are not premises of the proof.

\begin{proposition}\label{prop:centralizers}
For every rank-two elementary abelian subgroup $E\leq G$,
\[
  \depth H^*(C_G(E);\kbar)\geq3.
\]
\end{proposition}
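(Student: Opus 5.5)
The proof is a case split over the six centralizer types in Table~\ref{tab:centralizers}. By Lemma~\ref{lem:enumeration-complete}, every rank-two elementary abelian subgroup $E\leq G$ appears among the $75$ enumerated subgroups. Hence $C_G(E)$ is isomorphic to one of $\SG{16}{10}$, $\SG{16}{14}$, $\SG{32}{22}$, $\SG{32}{46}$, $\SG{64}{90}$, $\SG{64}{216}$. Isomorphic groups have isomorphic cohomology rings, and depth is an invariant of the graded ring. It therefore suffices to prove $\depth H^*(P;\kbar)\geq 3$ for each of these six groups $P$.

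For the first four types, I will apply Theorem~\ref{thm:duflot} with the center ranks $3,4,3,3$ recorded in the table. These are directly computable finite-group invariants. This gives $\depth H^*(P;\F_2)\geq 3$, and the bound holds for any field of characteristic two.

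For the two exceptional order-$64$ types, I will use the presentations in Appendix~\ref{app:exceptional-rings}, written as $S/J$ with $S$ a weighted polynomial ring over $\F_2$. I will take the triples \eqref{eq:sequence-90} and \eqref{eq:sequence-216} and verify three colon equalities in succession:
\[
(J:g_1)=J,\qquad ((J+(g_1)):g_2)=J+(g_1),\qquad ((J+(g_1,g_2)):g_3)=J+(g_1,g_2).
\]
Each equality is checked by computing the ideal quotient and reducing its generators to zero modulo a Gröbner basis of the current ideal. By Lemma~\ref{lem:certificate}, each equality says that $g_i$ is a non-zero-divisor modulo the earlier elements. The elements $g_i$ are homogeneous of positive degree, so the final quotient is a nonzero connected graded ring, since it contains $\F_2$ in degree zero. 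This yields regular sequences of length three over $\F_2$.

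Finally, I will transfer all six bounds from $\F_2$ to $\kbar$ using Lemma~\ref{lem:field-extension}. It gives $H^*(P;\kbar)\cong\kbar\otimes_{\F_2}H^*(P;\F_2)$ and shows that depth is unchanged under this base change.

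The main obstacle is the exceptional pair. Their correctness rests entirely on the exact Gröbner computations, which must use the weighted grading and the complete relation ideals. These computations must also be checked against the true cohomology rings, so that the presentations in the appendix really are $H^*(\SG{64}{90};\F_2)$ and $H^*(\SG{64}{216};\F_2)$. To guard against this, I would first confirm that the presentations reproduce the catalogued Poincaré series before running the colon computations. I would then treat the vanishing of all remainders as the certificate.
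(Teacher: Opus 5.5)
Your proposal is correct and follows the paper's proof essentially step for step. The steps are: exhaustiveness from Lemma~\ref{lem:enumeration-complete} and Table~\ref{tab:centralizers}, Duflot's theorem for the four types with center rank at least three, three successive colon equalities for $\SG{64}{90}$ and $\SG{64}{216}$, and base change to $\kbar$ via Lemma~\ref{lem:field-extension}. Your added Poincar\'e-series check is a reasonable safeguard but not a proof that the appendix presentations are correct, so, like the paper, you ultimately rely on the Green--King catalogue for them.
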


\begin{proof}
Lemma~\ref{lem:enumeration-complete} shows that no rank-two elementary abelian subgroup is omitted, and Table~\ref{tab:centralizers} shows that the centralizer of every such subgroup belongs to one of six isomorphism types. Duflot's theorem gives depth at least three for the first four types. The colon-ideal certificates \eqref{eq:sequence-90} and \eqref{eq:sequence-216} give the same lower bound for the remaining two. These arguments are carried out over $\F_2$, and Lemma~\ref{lem:field-extension} transfers all six bounds to $\kbar$. Therefore every possible $C_G(E)$ has cohomological depth at least three.
\end{proof}

\section{Proof of the counterexample}\label{sec:proof}

\begin{proof}[Proof of Theorem~\ref{thm:counterexample}]
Equation~\eqref{eq:ambient-depth-over-kbar} gives
\[
  \depth H^*(G;\kbar)=2.
\]
Suppose that Carlson's equality held for this group. Since the depth is two, there would then be an associated prime $\mathfrak p$ with quotient dimension two. Okuyama's theorem would produce a rank-two elementary abelian subgroup $E\leq G$ satisfying
\[
  \depth H^*(C_G(E);\kbar)=2.
\]
However, Proposition~\ref{prop:centralizers} proves that every such subgroup satisfies the incompatible lower bound
\[
  \depth H^*(C_G(E);\kbar)\geq3.
\]
Equivalently, Proposition~\ref{prop:finite-reduction}, which packages this contradiction together with \eqref{eq:standard-inequality}, gives
\[
  \oa\bigl(H^*(G;\kbar)\bigr)\geq3.
\]
Thus $\depth H^*(G;\kbar)=2<\oa(H^*(G;\kbar))$, so the two sides of Carlson's proposed equality are unequal.
\end{proof}

The logical implication from the finite certificates to the failure of \eqref{eq:carlson} is theorem-level and short. In particular, the argument does not infer associated primes numerically or approximate the depth of a ring. Computation is used only to establish the following exact finite statements:
\begin{itemize}
  \item the complete list of rank-two subgroups and their six centralizer types;
  \item the two colon equalities and the socle witness for $G$;
  \item the six colon equalities for the two exceptional centralizers.
\end{itemize}
All computations use exact arithmetic over $\F_2$. The group calculation is exhaustive, while each commutative-algebra check is an equality of explicitly generated ideals or a Gr\"obner normal-form calculation. Their inputs and their mathematical contracts are therefore explicit and independently reproducible.

\section{Context, scope, and reproducibility}\label{sec:context}

Two essential ingredients predate the present construction. Green and
King already published the dimension-four, depth-two catalogue entry for
$\SG{128}{859}$~\cite{Green_2011}, and Okuyama already proved the
abstract associated-prime--centralizer theorem used in
Theorem~\ref{thm:okuyama}. These results identify a candidate and provide
the conceptual bridge, but neither one determines what happens for the
rank-two elementary abelian subgroups of this particular group. The
group-specific information needed for a disproof is the complete
rank-two subgroup enumeration, the associated centralizer
classification, and a depth-at-least-three certificate for every
centralizer type that occurs. Those finite steps are the load-bearing
new content of the argument.

The finite verification can be reconstructed as follows. Starting from the presentation in Appendix~\ref{app:group-presentation}, enumerate the group elements using a confluent polycyclic or coset-enumeration procedure. Identify all nonidentity elements of order two, test every commuting pair, retain the generated subgroups of order four, and deduplicate by complete element sets as in Section~\ref{sec:enumeration}. For each retained subgroup, compute its centralizer, Small Groups identifier, and center $2$-rank. The required output is Table~\ref{tab:centralizers}, including both the six isomorphism types and their multiplicities.

For the commutative-algebra part, form the three quotient rings from Appendices~\ref{app:ambient-ring} and~\ref{app:exceptional-rings} using exact arithmetic over $\F_2$. For $G$, compute the two successive ideal quotients in \eqref{eq:ambient-colons}. Equality with the current ideal verifies the two non-zero-divisor conditions. Next reduce $w$ and its products with all thirteen generators; $w$ must have nonzero normal form and every product must have zero normal form, recovering Table~\ref{tab:ambient-certificate}. For each exceptional centralizer, form the three successive ideal quotients associated with \eqref{eq:sequence-90} and \eqref{eq:sequence-216}; every quotient generator must reduce to zero modulo the ideal at that stage. Together, these calculations reproduce every finite premise used in the proof.

\section*{Statement on AI-assisted discovery and human verification}
The counterexample presented in this paper was initially identified by the TARS agent system through an autonomous mathematical search. The mathematical argument, part of the finite computations, and presentation certificates were subsequently examined and independently verified by X. Dai, who reorganized the content, supplemented logical chains and wrote the paper.

\appendix
\section{Finite group and cohomology-presentation data}\label{app:data}

This appendix records the finite and algebraic inputs used by the certificates so that the computational claims can be checked without reconstructing the presentations from external databases. Multiplication is written as \texttt{*} in the exact ASCII listings. Group relators are set equal to the identity, and cohomology-ring relations are set equal to zero. The long relation lists are data for exact verification; the conceptual role of each calculation is explained in Sections~\ref{sec:enumeration}, \ref{sec:ambient-depth}, and~\ref{sec:centralizers}.

\subsection{A finite presentation of \texorpdfstring{$G$}{G}}\label{app:group-presentation}

Let $F$ be the free group on $x_1,\ldots,x_7$. The group $G=\SG{128}{859}$, of order $128$ and structural type $(C_4\times C_4):(C_4\times C_2)$, is $F/N$, where $N$ is the normal closure of the following $28$ words.

\begin{Verbatim}[fontsize=\small,breaklines=true,breakanywhere=true]
x1^2*x5^-1,
x2^-1*x1^-1*x2*x1*x4^-1,
x3^-1*x1^-1*x3*x1,
x4^-1*x1^-1*x4*x1*x6^-1,
x5^-1*x1^-1*x5*x1,
x6^-1*x1^-1*x6*x1*x7^-1,
x7^-1*x1^-1*x7*x1,
x2^2,
x3^-1*x2^-1*x3*x2*x7^-1,
x4^-1*x2^-1*x4*x2*x7^-1,
x5^-1*x2^-1*x5*x2*x7^-1*x6^-1,
x6^-1*x2^-1*x6*x2,
x7^-1*x2^-1*x7*x2,
x3^2,
x4^-1*x3^-1*x4*x3,
x5^-1*x3^-1*x5*x3,
x6^-1*x3^-1*x6*x3,
x7^-1*x3^-1*x7*x3,
x4^2*x7^-1,
x5^-1*x4^-1*x5*x4*x7^-1,
x6^-1*x4^-1*x6*x4,
x7^-1*x4^-1*x7*x4,
x5^2,
x6^-1*x5^-1*x6*x5,
x7^-1*x5^-1*x7*x5,
x6^2,
x7^-1*x6^-1*x7*x6,
x7^2.
\end{Verbatim}

\subsection{\texorpdfstring{$\SG{128}{859}$}{SmallGroup(128,859)}: structural and ring data}\label{app:ambient-ring}

\begin{table}[ht]
\centering
\caption{Catalogue data for $\SG{128}{859}$.}
\begin{tabular}{@{}ll@{}}
\toprule
Catalogue field & Value \\
\midrule
Minimal group generators; exponent & $3;\ 4$ \\
Group type; $2$-rank; center $2$-rank & nonabelian; $4;\ 1$ \\
Ranks of maximal elementary abelian classes & $3,4,4$ \\
Krull dimension; depth; Duflot bound & $4;\ 2;\ 1$ \\
Minimal ring generators; largest degree & $13;\ 8$ \\
Minimal relations; largest degree & $44;\ 14$ \\
Benson completion degree & $14$ (perfect completion) \\
$a$-invariants & $-\infty,-\infty,-6,-4,-4$ \\
\bottomrule
\end{tabular}
\end{table}

The catalogue Poincar\'e series is
\[
  P_G(t)=-\frac{t^7-t^6-t^5+t^3-t^2-1}
  {(t+1)(t-1)^4(t^2+1)(t^4+1)}.
\]
For a compact exact listing, use the aliases in Table~\ref{tab:aliases}.

\begin{table}[ht]
\centering
\caption{Aliases for the generators of $H^*(G;\F_2)$.}
\label{tab:aliases}
\begin{tabular}{@{}clc@{\qquad}clc@{}}
\toprule
Alias & Source generator & Degree & Alias & Source generator & Degree \\
\midrule
$a$ & $a_{1,0}$ & 1 & $h$ & $b_{5,24}$ & 5 \\
$b$ & $b_{1,1}$ & 1 & $i$ & $b_{5,25}$ & 5 \\
$c$ & $b_{1,2}$ & 1 & $j$ & $b_{6,35}$ & 6 \\
$d$ & $b_{2,4}$ & 2 & $k$ & $b_{6,36}$ & 6 \\
$e$ & $b_{2,5}$ & 2 & $l$ & $b_{7,49}$ & 7 \\
$f$ & $b_{2,6}$ & 2 & $m$ & $c_{8,65}$ & 8 \\
$g$ & $b_{3,11}$ & 3 & & & \\
\bottomrule
\end{tabular}
\end{table}

Here $a$ is nilpotent and $m$ is the catalogue's Duflot regular element. With the aliases above, the complete source relation ideal is generated by the following $44$ homogeneous polynomials.

\begin{Verbatim}[fontsize=\scriptsize,breaklines=true,breakanywhere=true]
r01 = a^2;
r02 = a*b;
r03 = d*b+e*a;
r04 = e*b+d*b;
r05 = b*c^2+f*a;
r06 = e^2+d*e+e*a*c;
r07 = a*g+f*a*c;
r08 = b*g+f*b*c;
r09 = f*a*c^2+f^2*a;
r10 = g^2+f^2*c^2+e*c^4+d*f^2+f^2*a*c;
r11 = a*h+f^2*a*c+d*f*a*c;
r12 = b*h+f*b^3*c+f^2*b*c+f^2*b^2+d*f*a*c;
r13 = a*i;
r14 = c^2*i+f*h+f*c^2*g+f^2*g+f^2*c^3+f^2*b^2*c+f^3*b+e*c^5+e*f*c^3+e*f^2*c+d*f*g+d*f^2*c+f^3*a+d*f^2*a;
r15 = e*i+e*h+e*f^2*c+d*e*g+d*e*c^3+d*e*f*c+d*f^2*a+d^2*f*a;
r16 = e*h+e*f^2*c+d*i+d*e*g+d*e*c^3+d*e*f*c+d*f^2*a+d^2*f*a;
r17 = e*h+e*f*g+d*e*g+j*a+f^3*a+d^2*e*a;
r18 = b*c*i+j*b+f*b^4*c+f^2*b^2*c+f^2*b^3+e*h+e*f*g+d*e*g+f^3*a+d^2*e*a;
r19 = e*c^2*g+e*f*c^3+d*h+d*c^2*g+d*f*g+d*f*c^3+d*e*f*c+d^2*g+d^2*f*c+k*a+f^3*a+d*f^2*a+d^2*f*a;
r20 = b^2*i+k*b+f^3*b+e*h+e*f*g+d*e*g+d^2*f*a;
r21 = g*h+f*c*h+f^2*c*g+f^3*c^2+e*f*c^4+e*f^2*c^2+d*c*i+d*f^2*c^2+d*f^3+d*e*f*c^2+d^2*f^2+j*a*c+f^3*a*c+
      d^2*f*a*c+d^2*e*a*c;
r22 = g*i+f*c*i+e*j+e*f*c^4+e*f^2*c^2+e*f^3+d*c*i+d*c^3*g+d*j+d*f*c^4+d*f^2*c^2+d*e*c*g+d*e*c^4+
      d*e*f*c^2+d^2*c*g+d^2*f*a*c+d^2*e*a*c;
r23 = g*i+g*h+f*c*i+f*c*h+f^2*c*g+f^3*c^2+e*f^2*c^2+e*f^3+d*c*h+d*c^3*g+d*f*c*g+d*f*c^4+d*f^2*c^2+d*f^3+
      d*e*c^4+d*e*f*c^2+d^2*c*g+d^2*f*c^2+d^2*f^2+k*a*c+f^3*a*c+d^2*f*a*c;
r24 = g*i+g*h+f*c*i+f*c*h+f^2*c*g+f^3*c^2+e*k+e*j+e*f*c^4+e*f^2*c^2+d*f^2*c^2+d*f^3+d*e*f^2+d^2*f^2+
      d^2*e*c^2+d^3*e+d^2*f*a*c;
r25 = g*i+f*c*i+e*f*c^4+e*f^3+d*c*i+d*c*h+d*c^3*g+d*f*c*g+d*f*c^4+d*e*c^4+d^2*c*g+d^2*f*c^2+a*l+
      d*f^2*a*c;
r26 = b*l+f*b*i+f*b^5*c+f^2*b^3*c+f^3*b*c+f^3*a*c+d*f^2*a*c+d^2*f*a*c;
r27 = j*g+f*j*c+f^2*c^2*g+f^3*c^3+e*l+e*c^7+e*f*c^5+e*f^2*c^3+d*c^2*h+d*c^4*g+d*f*h+d*f*c^2*g+d*f*c^5+
      d*f^2*g+d*f^2*c^3+d*e*c^5+d*e*f*c^3+d^2*i+d^2*c^2*g+d^2*f*g+d^2*f*c^3+d^2*e*c^3+k*a*c^2+f*j*a+
      f^4*a+d*f^3*a+d^3*f*a+d^3*e*a;
r28 = k*g+f*c^4*g+f*k*c+f^2*c^2*g+f^2*c^5+f^3*g+f^3*c^3+f^4*c+d*l+d*c^2*h+d*c^4*g+d*k*c+d*j*c+d*f*c^2*g+
      d*f*c^5+d*f^2*g+d*f^2*c^3+d^2*i+d^2*h+d^2*f*g+d^2*f*c^3+d^2*f^2*c+d^2*e*f*c+d^3*e*c+f*j*a+f^4*a+
      d*k*a+d^2*f^2*a+d^3*f*a+d^3*e*a;
r29 = h^2+f^4*c^2+f^4*b^2+d*f^2*c^4+d*f^4+d*e*f^2*c^2+d^2*e*c^4+d^3*f^2+d*f^3*a*c+d^2*f^2*a*c;
r30 = h*i+f*j*b^2+f^2*c*i+f^2*b*i+f^2*b^5*c+f^3*b^3*c+f^3*b^4+e*f^2*c^4+e*f^4+d*f*c*h+d*f*c^3*g+
      d*f^2*c*g+d*f^2*c^4+d*e*f^3+d^2*c*i+d^2*c*h+d^2*c^3*g+d^2*f*c^4+d^2*f^2*c^2+d^2*e*c^4+d^3*c*g+
      d^3*f*c^2+d*k*a*c+d*j*a*c+d^2*f^2*a*c+d^3*e*a*c;
r31 = g*l+c^7*g+j*c^4+f*c*l+f*c^5*g+f*c^8+f^2*c^3*g+f^2*b*i+f^2*k+f^4*c^2+f^5+e*c*l+e*c^8+d*c*l+d*c^3*h+
      d*k*c^2+d*j*c^2+d*f*c^3*g+d*f*c^6+d*f^3*c^2+d*f^4+d*e*c^6+d*e*f*c^4+d*e*f^3+d^2*c*i+d^2*c^3*g+
      d^2*e*c*g+d^3*c*g+d^3*f*c^2+d^3*f^2+d^3*e*c^2+f*j*a*c+f^4*a*c+d*j*a*c+d*f^3*a*c+d^3*f*a*c+
      d^3*e*a*c;
r32 = i^2+k*b^4+f*k*b^2+f^2*b*i+f^2*b^5*c+f^2*b^6+f^3*b^4+f^4*b*c+e*f^2*c^4+e*f^4+d*e*c^6+d*e*f^2*c^2+
      f^4*a*c+m*b^2;
r33 = j*i+j*h+j*b^5+f*b^8*c+f^2*c^2*h+f^2*b^7+f^2*k*b+f^2*j*c+f^2*j*b+f^3*h+f^3*c^2*g+f^3*b^4*c+f^4*g+
      e*f^4*c+d*c^4*h+d*j*c^3+d*f*c^2*h+d*f*c^4*g+d*f^2*h+d*f^2*c^2*g+d*f^3*c^3+d*f^4*c+d*e*l+d*e*c^7+
      d*e*f*c^5+d*e*f^2*c^3+d^2*f*h+d^2*f*c^2*g+d^2*f*c^5+d^2*f^2*c^3+d^2*e*c^5+d^3*i+d^3*f*g+d^3*e*c^3+
      k*a*c^4+f^2*j*a+d^4*f*a+d^4*e*a+m*b^2*c;
r34 = k*i+k*b^5+j*h+f*k*b^3+f^2*c^4*g+f^2*b^6*c+f^2*b^7+f^2*k*b+f^2*j*c+f^2*j*b+f^3*i+f^3*h+f^3*c^5+
      f^3*b^4*c+f^3*b^5+f^4*g+e*f^2*c^5+e*f^4*c+d*c^6*g+d*j*c^3+d*f*c^2*h+d*f*c^7+d*f^2*i+d*f^2*h+
      d*f^4*c+d*e*l+d*e*f*c^5+d*e*f^2*c^3+d^2*c^4*g+d^2*f^2*g+d^2*e*f*c^3+d^2*e*f^2*c+d^3*f^2*c+
      d^3*e*c^3+d*f^4*a+d^4*f*a+d^4*e*a+m*b^3;
r35 = j*h+f^2*c^2*h+f^2*j*c+f^2*j*b+f^3*b^4*c+f^4*c^3+e*f*l+e*f*c^7+e*f^3*c^3+d*f*c^2*h+d*f*c^4*g+
      d*f*j*c+d*f^2*h+d*f^2*c^2*g+d*f^3*g+d*e*l+d*e*c^7+d^2*c^2*h+d^2*c^4*g+d^2*f*i+d^2*f*c^5+d^2*f^2*g+
      d^2*e*c^5+d^2*e*f^2*c+d^3*i+d^3*c^2*g+d^3*f*c^3+d^3*f^2*c+d^3*e*c^3+f^5*a+d*k*a*c^2+d*f*j*a+
      d*f^4*a+d^2*j*a+d^2*f^3*a+d^4*e*a+e*m*a;
r36 = k*h+f*c^4*h+f*j*b^3+f^2*c^2*h+f^2*b^6*c+f^2*k*c+f^2*k*b+f^3*h+f^3*c^5+f^3*b^4*c+f^3*b^5+f^4*c^3+
      f^5*c+f^5*b+e*c^2*l+e*f*c^7+e*f^2*c^5+d*c^2*l+d*c^4*h+d*c^6*g+d*k*c^3+d*j*c^3+d*f*l+d*f*c^2*h+
      d*f*c^4*g+d*f*c^7+d*f*k*c+d*f^2*h+d*f^2*c^2*g+d*f^2*c^5+d*f^3*c^3+d*e*c^7+d*e*f*c^5+d*e*f^2*c^3+
      d^2*l+d^2*c^2*h+d^2*k*c+d^2*j*c+d^2*f*i+d^2*f*c^2*g+d^2*f*c^5+d^2*e*c^5+d^2*e*f*c^3+d^3*i+d^3*f*g+
      d^3*f*c^3+d^3*e*c^3+d^4*g+d^4*f*c+d^4*e*c+k*a*c^4+f^5*a+d*k*a*c^2+d*f^4*a+d^2*k*a+d^2*j*a+
      d^2*f^3*a+d^3*f^2*a+d^4*f*a+d^4*e*a+d*m*a;
r37 = j*b^6+j*k+j^2+f*b^9*c+f*j*c^4+f^2*b^8+f^2*k*c^2+f^2*k*b^2+f^2*j*c^2+f^3*c^6+f^3*j+f^4*b^3*c+
      f^4*b^4+f^5*c^2+f^5*b^2+e*c^10+d*c^3*l+d*c^5*h+d*k*c^4+d*f*k*c^2+d*f*j*c^2+d*f^2*c^6+d*f^2*j+
      d*f^3*c^4+d*e*c*l+d*e*f*c^6+d*e*f^2*c^4+d^2*c*l+d^2*c^3*h+d^2*k*c^2+d^2*e*f*c^4+d^2*e*f^2*c^2+
      d^3*c*h+d^3*c^3*g+d^3*j+d^3*f*c*g+d^3*f*c^4+d^3*f^2*c^2+d^3*e*f*c^2+d^4*c*g+d^4*e*c^2+f^5*a*c+
      d*f*j*a*c+d*f^4*a*c+d^2*k*a*c+d^2*f^3*a*c+d^3*f^2*a*c+d^4*f*a*c+m*b^3*c;
r38 = j^2+f^4*c^4+f^4*b^4+e*c^10+e*f^3*c^4+e*f^4*c^2+d*f*c^5*g+d*f*j*c^2+d*f^2*c*h+d*f^2*c^3*g+d*f^3*c*g+
      d*e*c*l+d*e*f^2*c^4+d*e*f^3*c^2+d^2*c^3*h+d^2*j*c^2+d^2*f^2*c*g+d^2*f^3*c^2+d^2*e*f^2*c^2+d^3*c*h+
      d^3*c^3*g+d^3*f*c*g+d^3*f^2*c^2+d^3*e*c*g+d^3*e*c^4+d^4*c*g+d^4*f*c^2+d^4*e*c^2+f^2*j*a*c+
      d*k*a*c^3+d*f*j*a*c+d*f^4*a*c+d^2*k*a*c+d^2*j*a*c+d^2*f^3*a*c+d^4*f*a*c+d^4*e*a*c+d*e*m+e*m*a*c;
r39 = h*l+f*c^5*h+f*c^7*g+f*j*c^4+f^2*c*l+f^2*c^3*h+f^2*c^5*g+f^2*c^8+f^2*k*c^2+f^2*j*c^2+f^2*j*b^2+
      f^3*k+f^4*b^4+f^5*b*c+f^6+e*c^3*l+d*c^3*l+d*c^5*h+d*k*c^4+d*f*c*l+d*f*c^3*h+d*f*c^5*g+d*f*k*c^2+
      d*f*j*c^2+d*f^2*c*h+d*f^2*c^3*g+d*f^2*k+d*f^3*c*g+d*f^3*c^4+d*f^4*c^2+d*e*c*l+d*e*f*c^6+
      d*e*f^2*c^4+d*e*f^3*c^2+d*e*f^4+d^2*c*l+d^2*c^3*h+d^2*c^5*g+d^2*k*c^2+d^2*j*c^2+d^2*f*c*i+
      d^2*f*c^6+d^2*f^2*c*g+d^2*f^4+d^2*e*f*c^4+d^2*e*f^2*c^2+d^2*e*f^3+d^3*c*h+d^3*c^3*g+d^3*f^3+
      d^3*e*c*g+d^3*e*f*c^2+d^4*f^2+d^4*e*c^2+k*a*c^5+f^2*j*a*c+f^5*a*c+d*k*a*c^3+d*f^4*a*c+d^2*j*a*c+
      d^3*f^2*a*c+d^4*f*a*c+d^4*e*a*c+e*m*a*c+d*m*a*c;
r40 = k*b^6+k^2+f*k*b^4+f^2*c^8+f^2*b^7*c+f^2*b^8+f^2*k*b^2+f^3*b^6+f^4*c^4+f^4*b^3*c+f^5*b^2+f^6+
      e*f*c^8+e*f^2*c^6+d*c^5*h+d*c^7*g+d*k*c^4+d*j*c^4+d*f*c^5*g+d*f*c^8+d*f*k*c^2+d*f^2*c^3*g+
      d*e*f*c^6+d*e*f^3*c^2+d*e*f^4+d^2*c*l+d^2*c^5*g+d^2*j*c^2+d^2*f*k+d^2*f*j+d^2*f^4+d^2*e*c^6+
      d^2*e*f^3+d^3*c^3*g+d^3*k+d^3*j+d^3*f^3+d^3*e*c*g+d^3*e*f*c^2+d^4*f^2+d^5*f+k*a*c^5+f^5*a*c+
      d*f*j*a*c+d*f^4*a*c+d^2*k*a*c+d^3*f^2*a*c+d^4*f*a*c+m*b^4+d^2*m;
r41 = i*l+f*c^7*g+f*k*b^4+f*j*c^4+f*j*b^4+f^2*c^3*h+f^2*c^5*g+f^2*c^8+f^2*b^7*c+f^2*k*b^2+f^2*j*b^2+
      f^3*c*h+f^3*c^3*g+f^3*b*i+f^3*b^5*c+f^3*j+f^4*c*g+f^5*c^2+f^5*b*c+f^5*b^2+e*c^3*l+e*f*c*l+e*f*c^8+
      e*f^2*c^6+d*f*c^5*g+d*f^2*c*i+d*f^2*c^3*g+d*f^3*c^4+d*f^4*c^2+d*e*f^3*c^2+d*e*f^4+d^2*f*c*i+
      d^2*e*f*c^4+d^2*e*f^2*c^2+f^5*a*c+f*m*b^2;
r42 = c^10*g+j*l+j*c^7+f*c^11+f*j*c^5+f*j*b^5+f^2*c^2*l+f^2*c^9+f^2*b^8*c+f^2*k*c^3+f^2*j*c^3+f^3*c^2*h+
      f^3*c^4*g+f^3*b^6*c+f^3*b^7+f^3*k*b+f^4*h+f^4*c^5+f^4*b^4*c+f^5*g+f^5*b^2*c+e*c^11+e*f*c^9+e*f^5*c+
      d*c^4*l+d*c^8*g+d*k*c^5+d*j*c^5+d*f*c^2*l+d*f^2*c^7+d*f^2*k*c+d*f^3*c^2*g+d*f^3*c^5+d*f^4*g+
      d*f^4*c^3+d*e*f*l+d*e*f^2*c^5+d*e*f^3*c^3+d^2*c^2*l+d^2*c^4*h+d^2*c^6*g+d^2*k*c^3+d^2*j*c^3+
      d^2*f*c^2*h+d^2*f*c^4*g+d^2*f*j*c+d^2*f^2*h+d^2*f^3*g+d^2*f^3*c^3+d^2*f^4*c+d^2*e*c^7+d^2*e*f*c^5+
      d^2*e*f^2*c^3+d^3*f*i+d^3*f*h+d^3*f*c^2*g+d^3*f*c^5+d^3*f^2*c^3+d^3*f^3*c+d^3*e*c^5+d^4*c^2*g+
      d^4*f*g+d^4*f*c^3+d^4*e*c^3+k*a*c^6+d*f^5*a+d^3*f^3*a+d^4*f^2*a+d^5*f*a+f*m*b^2*c+e*m*g+e*f*m*c+
      d*e*m*a;
r43 = k*l+f*c^4*l+f*c^6*h+f*k*c^5+f*k*b^5+f*j*b^5+f^2*c^2*l+f^2*c^4*h+f^2*c^6*g+f^2*c^9+f^2*b^8*c+
      f^2*k*c^3+f^2*k*b^3+f^2*j*b^3+f^3*l+f^3*b^6*c+f^3*k*b+f^3*j*b+f^4*c^2*g+f^4*c^5+f^5*b^3+f^6*b+
      e*f*c^9+e*f^4*c^3+d*k*c^5+d*j*c^5+d*f*c^2*l+d*f*c^6*g+d*f^2*l+d*f^2*c^2*h+d*f^2*c^4*g+d*f^2*k*c+
      d*f^3*i+d*f^3*h+d*f^3*c^2*g+d*e*c^2*l+d*e*f*c^7+d*e*f^2*c^5+d*e*f^3*c^3+d*e*f^4*c+d^2*c^2*l+
      d^2*c^4*h+d^2*f*l+d^2*f*c^7+d^2*f*j*c+d^2*f^2*i+d^2*f^2*c^5+d^2*e*l+d^3*c^4*g+d^3*k*c+d^3*j*c+
      d^3*f*i+d^3*f*h+d^3*f*c^5+d^3*f^2*g+d^3*f^3*c+d^3*e*c^5+d^3*e*f^2*c+d^4*h+d^4*c^2*g+d^4*f*c^3+
      d^4*e*c^3+d^5*g+d^5*e*c+k*a*c^6+d*f^2*j*a+d*f^5*a+d^3*k*a+d^3*f^3*a+d^5*f*a+f*m*b^3+d*m*g+d*f*m*c+
      d*e*m*c+d^2*m*c+d*m*a*c^2+d*e*m*a;
r44 = l^2+f*c^9*g+f*j*c^6+f^2*k*c^4+f^2*k*b^4+f^2*j*c^4+f^3*c^8+f^3*k*c^2+f^3*k*b^2+f^4*c^3*g+f^4*c^6+
      f^4*b*i+f^4*b^5*c+f^4*b^6+f^5*b^4+f^6*b*c+e*c^5*l+e*c^12+e*f*c^10+e*f^3*c^6+e*f^4*c^4+e*f^5*c^2+
      e*f^6+d*c^9*g+d*k*c^6+d*f*c^10+d*f*k*c^4+d*f*j*c^4+d*f^2*c*l+d*f^2*c^3*h+d*f^2*c^8+d*f^2*j*c^2+
      d*f^3*c^3*g+d*f^3*c^6+d*f^3*k+d*f^3*j+d*e*c^3*l+d*e*f*c^8+d*e*f^2*c^6+d*e*f^4*c^2+d*e*f^5+
      d^2*c^3*l+d^2*f*c^5*g+d^2*f*k*c^2+d^2*f*j*c^2+d^2*f^2*c^6+d^2*f^2*k+d^2*f^2*j+d^2*f^3*c^4+d^2*f^5+
      d^2*e*f*c^6+d^2*e*f^2*c^4+d^2*e*f^3*c^2+d^2*e*f^4+d^3*k*c^2+d^3*j*c^2+d^3*f*c*i+d^3*f^2*c^4+
      d^3*f^3*c^2+d^3*f^4+d^3*e*c^6+d^3*e*f*c^4+d^4*f^2*c^2+d^4*f^3+d^4*e*c^4+d^5*f*c^2+d^5*f^2+
      d^5*e*c^2+f^3*j*a*c+d^2*k*a*c^3+d^5*f*a*c+f^2*m*b^2+e*m*c^4+d*f^2*m+d*e*m*c^2+d^2*m*c^2+f^2*m*a*c+
      d*f*m*a*c;
\end{Verbatim}

\subsection{Exceptional centralizer presentations}\label{app:exceptional-rings}

For both exceptional centralizers, the catalogue data give Krull dimension $4$, depth $3$, Duflot bound $2$, $a$-invariants $-\infty,-\infty,-\infty,-4,-4$, and perfect Benson completion in degree $6$. The remaining structural data are shown in Table~\ref{tab:exceptional-data}.

\begin{table}[ht]
\centering
\caption{Catalogue data for the two exceptional centralizers.}
\label{tab:exceptional-data}
\begin{tabular}{@{}lcc@{}}
\toprule
Field & $\SG{64}{90}$ & $\SG{64}{216}$ \\
\midrule
Minimal group generators & 3 & 4 \\
Exponent; $2$-rank; center $2$-rank & $4;4;2$ & $4;4;2$ \\
Ranks of maximal elementary abelian classes & $4,4$ & $3,3,4,4$ \\
Ring generators / relations & $9/14$ & $7/5$ \\
Largest generator / relation degree & $4/6$ & $4/6$ \\
Poincar\'e-series denominator & $(t+1)(t-1)^4$ & $(t-1)^4(t^2+1)$ \\
\bottomrule
\end{tabular}
\end{table}
The numerator of each displayed Poincar\'e series is $1$.

\subsubsection{\texorpdfstring{$\SG{64}{90}$}{SmallGroup(64,90)}}

Write
\[
  S_{90}=\F_2[a,b,c,d,e,f,g,h,q]
\]
with weights
\[
  |a|=|b|=|c|=1,\qquad
  |d|=|e|=|f|=2,\qquad
  |g|=|h|=3,\qquad |q|=4,
\]
corresponding respectively to
\[
  a_{1,0},b_{1,1},c_{1,2},b_{2,4},b_{2,5},b_{2,6},b_{3,11},b_{3,12},c_{4,21}.
\]
The relation ideal is generated by
\begin{align*}
&a^2,\ ab,\ db+ea,\ eb+db,\ db+fa,\ e^2+df,\\
&ag,\ bg,\ ah,\ fg+eh+dea,\ eg+dh+dea,\\
&g^2+df^2+d^2f,\quad gh+ef^2+def,\\
&h^2+fbh+f^3+df^2+qb^2.
\end{align*}
The regular sequence \eqref{eq:sequence-90} becomes
\[
  c,\qquad q,\qquad b^2+f+e+d.
\]

\subsubsection{\texorpdfstring{$\SG{64}{216}$}{SmallGroup(64,216)}}

Write
\[
  S_{216}=\F_2[x,y,z,w,u,v,q]
\]
with weights
\[
  |x|=|y|=|z|=|w|=1,\qquad |u|=2,\qquad |v|=3,\qquad |q|=4,
\]
corresponding respectively to
\[
  b_{1,0},b_{1,1},b_{1,2},b_{1,3},c_{2,8},b_{3,15},c_{4,25}.
\]
The relation ideal is generated by
\begin{align*}
  &xz,\\
  &yz+y^2+xw+xy,\\
  &xw^2,\\
  &xv+uxw,\\
  &v^2+zw^2v+qz^2+uw^4+uzw^3+uz^2w^2+uy^3w+ux^2yw+u^2w^2.
\end{align*}
The regular sequence \eqref{eq:sequence-216} becomes
\[
  u,\qquad q,\qquad w^2+zw+z^2+x^2.
\]

\bibliographystyle{plain}
\begingroup
\catcode`<=\active
\def<i>#1</i>{\ \emph{#1}}
\bibliography{
  bibtex_month_macros,
  bib_exports/carlson_1995,
  bib_exports/duflot_1981,
  bib_exports/green_2003,
  bib_exports/green_king_2011,
  bib_exports/okuyama_2010,
  bib_exports/schafer_2019,
  bib_exports/garaialde_et_al_2022,
  bib_exports/schafer_2025
}
\endgroup

\end{document}